\documentclass[document,11pt]{article}
\usepackage{amsmath,multicol}
\usepackage{epsf, subfigure, verbatim, epsfig}
\usepackage{fancyhdr}
\usepackage{geometry}
\usepackage{calc}
\usepackage{ifthen}
\usepackage{layout}
\usepackage{fancybox}
\usepackage{eurosym}
\usepackage{tabularx}
\usepackage{xspace}
\usepackage{dsfont,mathrsfs}
\usepackage{amssymb}
\usepackage{amsthm}
\usepackage[utf8]{inputenc}
\usepackage[english]{babel}
\usepackage[backend=biber,style = alphabetic]{biblatex}
\addbibresource{resources.bib}

\usepackage{mathtools}
\usepackage{amsthm}
\usepackage{amssymb}
\usepackage{fixltx2e}
\usepackage[T1]{fontenc}
\usepackage{newpxtext}
\usepackage[varg,bigdelims]{newpxmath}
\linespread{1.15}
\usepackage[cal=euler,scr=rsfso]{mathalfa}
\usepackage{bm}
\usepackage{inputenc}
\usepackage{microtype}
\usepackage[usenames,dvipsnames]{xcolor}
\usepackage{paralist}
\usepackage{booktabs}
\usepackage{tikz}
\usepackage{todonotes}
\usepackage[bookmarks=true,colorlinks=true, linkcolor=MidnightBlue, citecolor=cyan]{hyperref}
\usepackage{tabularx}
\usepackage{multirow}
\usepackage{makecell}

\usepackage{thmtools}
\usepackage{thm-restate}

\usepackage{cleveref}

\usetikzlibrary{
	cd,
	decorations.markings,
	positioning,
	arrows.meta,
	shapes,
	calc,
	fit,
	quotes}
\hypersetup{final}

\tikzset{
  tick/.style={postaction={
    decorate,
    decoration={markings, mark=at position 0.5 with {\draw[-] (0,.4ex) -- (0,-.4ex);}}}
}}

\newtheorem{lemma}{Lemma}[section]
\theoremstyle{definition}
\newtheorem{definition}{Definition}[section]

\newcommand{\mbb}[1]{\mathbb{#1}}

\DeclareMathOperator{\Id}{Id}
\DeclareMathOperator{\Prof}{Prof}

\DeclareMathOperator{\Set}{Set}
\DeclareMathOperator{\RK}{RK}
\DeclareMathOperator{\CS}{CS}
\DeclareMathOperator{\DS}{DS}

\pagestyle{fancy}

\textheight8.5in

\topmargin-0.5in

\oddsidemargin-0in
\evensidemargin-0in
\textwidth6.5in

\headheight0.5in
\fancyhead[L]{Compositionality of the Runge-Kutta method}
\fancyhead[R]{Timothy Ngotiaoco}
\fancyhfoffset[R]{0in}

\begin{document}

\large
\begin{center}
{\bf Compositionality of the Runge-Kutta Method}
\end{center}

\normalsize

\begin{abstract}

In Spivak \cite{spivak}, dynamical systems are described in terms of their inputs and outputs in a pictorial way using an operad of wiring diagrams.
Each dynamical system is a box with certain inputs and outputs, and multiple dynamical systems are linked together using wiring diagrams, which describe how the outputs of one dynamical system to the inputs of another.
By describing dynamical systems in this way, we can decompose a large dynamical system as a collection of smaller, simpler dynamical systems linked together.
Of course, this decomposition is only useful if we can work with these smaller, simpler dynamical systems instead of the larger one.
In his paper, Spivak shows that we can perform Euler's method on these smaller systems and still get the same results as working on the larger one.
In this paper, we extend his results to prove that we can do something similar with the Runge-Kutta method.
However, we need to modify the framework used in Spivak's paper to account for the fact that the Runge-Kutta method requires multiple steps, unlike Euler's method.
To better describe these systems, we define wiring diagrams as objects of a double-category and dynamical systems in terms of double functors, giving a categorical description of this approximation method.

\end{abstract}

\section{Introduction} \label{introduction}

Dynamical systems are everywhere, from the cells in our bodies to the computers which we work on.
Each of these machines take inputs, modify an internal state, and then give an output.
For a computer, the inputs could be user inputs on a keyboard, which then modify the binary state held in the hard drive and memory, and the output could be the display on the monitor.
In addition, by using the outputs of and providing the inputs for other dynamical systems, dynamical systems can become interconnected through a network which as a whole represents one larger dynamical system, such as the internet, which is a dynamical system composed of many interconnected computers.
When working with such a network, often it would be much easier to work with the individual dynamical systems and then combine the results instead of composing all the dynamical systems first and then working with that large system.

In Spivak's work \cite{spivak}, he provides a framework for describing dynamical systems and their interconnections through wiring diagrams.
A dynamical system with input set $A$ and output set $B$ is described as a tuple $(S,f^{rdt},f^{upd})$, with $S$ the state set, $f^{rdt}: S \to B$ a readout function turning the state to an output, and $f^{upd}: A \times S \to S$ an update function changing the state depending on the input.
For a discrete dynamical system, we let our state set be any set, and we define $f^{upd}$ by taking an input and state and give back the next state which should result.
For a continuous dynamical system, we let our state set be a real vector space and define $f^{upd}$ so that it takes the input and state and returns the infinitesimal change (like a derivative) which should result.
Often, we will describe a dynamical system as a diagram
\[
\begin{tikzcd}
A \times S \arrow[r] & S \arrow[r] & B
\end{tikzcd}
\]
with the arrows representing the update and readout functions.

In his paper, Spivak proves that Euler's method works in a natural way when converting continuous dynamical systems to discrete dynamical systems so that we can work with the individual dynamical systems before composing instead of having to work with the much larger combined dynamical system.
We would like to extend that result to the Runge-Kutta method, but unfortunately, the discrete dynamical system format above does not capture the four-step 
approach that Runge-Kutta uses.
In every iteration of Runge-Kutta, there are four computations performed, each depending on all the previous results of the iteration, but this iterative cycle of four steps is not captured in the original definition of a discrete dynamical system, as it only goes step by step and does not remember any states besides the previous one.
In addition, Spivak describes his continuous dynamical systems using the manifold structure of a real vector space, but for Runge-Kutta, it is the vector space properties which are more important.
As a result, we need to describe dynamical systems differently to more accurately capture this information.
In particular, we introduce a second type of morphism which describes a dynamical system working in line with another dynamical system.
An iterative four-step dynamical system can be thought of as a dynamical system which lines up with the cyclic four-step dynamical system $C_4$ shown below.
\[
\begin{tikzcd}
1 \arrow[r] & 2 \arrow[d] \\
4 \arrow[u] & \arrow[l] 3
\end{tikzcd}
\]
To incorporate both types of morphisms, we describe our dynamical systems using double categories and double functors and describe Runge-Kutta as a double transformation.
To capture how dynamical systems can be combined, we also impose a monoidal product to describe dynamical systems being put side-by-side without interacting with each other, similar to what is described in Spivak's work, and we prove that our double functors and double transformations preserve this monoidal structure.

The next section, Section \ref{definitions}, defines what double categories, double functors, and double transformations are, along with their monoidal versions. 
To understand these definitions, however, the reader needs to know the definitions of categories, functors, and natural transformations, in addition to the monoidal forms of each of these concepts.
Spivak's paper describes continuous systems using a functor $\CS$ and discrete dynamical systems using a functor $\DS$ from the category of wiring diagrams to the category of sets.
In Section \ref{dynamical}, we define the double category of wiring diagrams $\mathscr{W}$ and describe our continuous dynamical systems through a double functor $\CS: \mathscr{W} \to \Prof$ and our four-step dynamical systems through a double functor $\DS/C_4 : \mathscr{W} \to \Prof$, where $\Prof$ is the double category of profunctors.
We then describe a monoidal product to represent combining dynamical systems together and show that our double category and double functors preserve this product.
Finally, in Section \ref{runge-kutta}, we describe Runge-Kutta as a double transformation between $\CS$ and $\DS/C_4$ and prove the following theorem.
\begin{restatable}{thm}{rkthm}
The Runge-Kutta method is a monoidal double transformation between $\CS$ and $\DS/ C_4$.
\end{restatable}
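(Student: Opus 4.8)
The plan is to exhibit the Runge-Kutta assignment as a family of cells in $\Prof$, one for each box of $\mathscr{W}$, and then to verify in turn the three coherence conditions that together constitute a monoidal double transformation: naturality against the horizontal morphisms of $\mathscr{W}$ (the wiring diagrams), naturality against the vertical morphisms, and compatibility with the monoidal product. I would begin by unwinding the definitions of $\CS$ and $\DS/C_4$ on objects so that each component cell can be written down explicitly, and so that the statement "Runge-Kutta is a transformation" is reduced to concrete equalities of cells in $\Prof$.

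First, for each box $W$ I would define the component of the transformation directly from the four Runge-Kutta stages. Given a continuous system on $W$ with state space $S$ and vector field $f$, the classical method computes $k_1=f(s)$, $k_2=f(s+\tfrac{h}{2}k_1)$, $k_3=f(s+\tfrac{h}{2}k_2)$, $k_4=f(s+h\,k_3)$, and returns $s+\tfrac{h}{6}(k_1+2k_2+2k_3+k_4)$. I would package these four evaluations as the four sub-steps of a $\DS/C_4$ system, so that the four objects of $C_4$ correspond to the four stages and the vertical structure over $C_4$ records which stage is being computed. This produces the component $\CS(W)\to\DS/C_4(W)$ in $\Prof$, and I would check first that it genuinely lands in $\DS/C_4(W)$, i.e.\ that the stage data assemble into a well-defined four-step discrete system with the correct update and readout.

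Next I would verify the naturality conditions. The vertical naturality, compatibility with the vertical morphisms of $\mathscr{W}$ that record how one system runs in step with another, should follow formally from the way the stages are indexed over $C_4$. The horizontal naturality is the substantive compositionality statement: for a wiring diagram $\phi\colon W\to W'$, applying Runge-Kutta and then re-wiring must equal re-wiring and then applying Runge-Kutta. Because the wiring feeds the outputs of the inner boxes into one another's inputs, each of the four stages must be evaluated with the inputs determined by the wiring \emph{at that same stage}; I would check stage by stage that $k_i$ computed on the composite system agrees with the $k_i$ assembled from the components through $\phi$, so that coherence of the filling cells in $\Prof$ reduces to the equality of these stagewise computations.

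The main obstacle is precisely this horizontal naturality, and it is harder than in Spivak's Euler case for two reasons: the four stages are sequentially dependent, so the input to stage $i$ depends on the partially updated state produced by stages $1,\dots,i-1$, and the intermediate evaluations occur at shifted states $s+\tfrac{h}{2}k_{i-1}$ rather than at $s$ itself. I would therefore organize the argument around showing that the wiring map commutes with each individual stage evaluation, so that the sequential composite of all four stages is preserved; the half-step shifts cause no difficulty once one observes that they are applied pointwise in the state and hence commute with the state-independent wiring. Finally, monoidal compatibility is the most routine part: since the monoidal product places systems side by side without interaction, the stages on a product system are computed coordinatewise, and the required comparison with the product of the component cells follows directly, using that $\CS$, $\DS/C_4$, and the monoidal structure maps have all been shown to respect the product in the preceding section.
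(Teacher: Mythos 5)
Your proposal is correct and follows essentially the same route as the paper: define the component of the transformation on each box via the four Runge--Kutta stages, isolate the compositionality statement $\RK_h(\varphi(X)) = \varphi(\RK_h(X))$ against wiring diagrams as the key lemma and verify it stage by stage (using that the intermediate readouts expose the shifted states $s+\tfrac{h}{2}k_{i-1}$ so each stage receives the correctly wired input), then check the remaining naturality and the monoidal compatibility componentwise. The only discrepancy is cosmetic: you call the wiring diagrams the horizontal morphisms, whereas in the paper's double category $\mathscr{W}$ they are the morphisms of $\mathscr{W}_0$ and appear as the vertical sides of the 2-cells, with the in-step pairs $(f,g)$ as the horizontal (tick) arrows.
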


\section{Definitions} \label{definitions}

We begin by defining double categories, double functors, and double transformations, using the works of Shulman \cite{shulman:1}\cite{shulman:2}.
As mentioned in the introduction, double categories allow us to define two types of morphisms for our objects.

\begin{definition}
A \textit{(pseudo) double category} $\mbb{D}$ consists of a ``category of objects $\mbb{D}_0$'' and a ``category of arrows'' $\mbb{D}_1$, with structure functors
\[
U: \mbb{D}_0 \to \mbb{D}_1
\]
\[
L,R: \mbb{D}_1 \to \mbb{D}_0
\]
\[
\odot: \mbb{D}_1 \times_{\mbb{D}_0} \mbb{D}_1 \to \mbb{D}_1
\]
where the pullback is over 

\begin{center}
\begin{tikzcd}
\mbb{D}_1 \arrow{r}{R} & \mbb{D}_0 & \arrow[l,swap,"L"] \mbb{D}_1
\end{tikzcd}
\end{center}
These structure functors must satisfy the relations

\begin{center}
\begin{tabular}{rl}
$L(U_A) = A$ &$R(U_A) = A$\\
$L(M \odot N) = L(M)$ &$R(M \odot N) = R(N)$
\end{tabular}
\end{center}
and are equipped with natural transformations
\[
a: (M \odot N) \odot P \to M \odot (N \odot P)
\]
\[
l: U_{L(M)} \odot M \to M
\]
\[
r: M \odot U_{R(M)} \to M
\]
such that $L(a)$, $R(a)$, $L(l)$, $R(l)$, $L(r)$, $R(r)$ are all identities and the following diagrams commute:

(Triangle)
\begin{center}
\begin{tikzcd}
(x \odot U_{Rx}) \odot y \arrow[rr,"a"] \arrow[dr,"r \odot 1"'] && x \odot (U_{Ly} \odot y) \\
& x \odot y \arrow[ur,"1 \odot l"'] &
\end{tikzcd}
\end{center}

(Pentagon)
\begin{center}
\begin{tikzcd}
 & (w \odot x) \odot (y \odot z) \arrow[dr,"a"]\\
((w \odot x) \odot y) \odot z \arrow[ur,"a"] \arrow[d,"a"']& & (w \odot (x \odot (y \odot z))) \\
(w \odot (x \odot y)) \odot z \arrow[rr,"a"'] && w \odot ((x \odot y) \odot z) \arrow[u,"a"']
\end{tikzcd}
\end{center}
\end{definition}

\begin{definition}
Let $\mbb{D}$ and $\mbb{E}$ be double categories. A \textit{lax double functor} $F: \mbb{D} \to \mbb{E}$ consists of the following:
\begin{itemize}
\item Functors $F_0: \mbb{D}_0 \to \mbb{E}_0$ and $F_1: \mbb{D}_1 \to \mbb{E}_1$ such that $L \circ F_1 = F_0 \circ L$ and $R \circ F_1 = F_0 \circ R$.

\item Natural Transformations $F_{\odot} : F_1 M \odot F_1 N \to F_1(M \odot N)$ and $F_U : U_{F_0 -} \to F_1(U_{-})$, whose components are globular , and for which the following diagrams commute.
\end{itemize}

(Associativity)
\begin{center}
\begin{tikzcd}
(F_1(x) \odot F_1(y)) \odot F_1(z) \arrow[d,"F_{\odot} \odot id"'] \arrow[r,"a"] & F_1(x) \odot (F_1(y) \odot F_1(z)) \arrow[d,"id \odot F_{\odot}"]\\
F_1(x \odot y) \odot F_1(z) \arrow[d,"F_\odot"'] & F_1(x) \odot F_1(y \odot z) \arrow[d,"F_\odot"]\\
F_1((x \odot y) \odot z) \arrow[r,"F_1(a)"'] & F_1(x \odot (y \odot z))
\end{tikzcd}
\end{center}

(Unitality)
\begin{center}
\begin{tikzcd}
U_{L \circ F_1x} \odot F_1(x) \arrow[d,"l"'] \arrow[r,"F_u \odot id"] & F_1(U_{Lx}) \odot F_1x \arrow[d,"F_\odot"]\\
F_1(x) & \arrow[l,"F_1(l)"]  F_1(U_{Lx} \circ x)
\end{tikzcd} \begin{tikzcd}
F_1(x) \odot U_{F_0 \circ Rx} \arrow[d,"r"'] \arrow[r,"id \odot F_U"] & F_1(x) \odot F_1(U_{Rx}) \arrow[d,"F_\odot"]\\
F_1(x) & \arrow[l,"F_1(r)"] F_1(x \odot U_{Rx})
\end{tikzcd}
\end{center}

If $F_U$ is an isomorphism, then $F$ is called a normal double functor.
\end{definition}

\begin{definition}
A \textit{double transformation} between two lax double functors $\alpha: F \Rightarrow G: \mbb{D} \to \mbb{E}$ consists of natural transformations $\alpha_0: F_0 \Rightarrow G_0: \mbb{D}_0 \to \mbb{E}_0$ and $\alpha_1 : F_1 \Rightarrow G_1: \mbb{D}_1 \to \mbb{E}_1$ such that $L(\alpha_M) = \alpha_{LM}$ and $R(\alpha_M) = \alpha_{RM}$ and

\[
\begin{tikzcd}
FA \arrow[drr,phantom,"F_{\odot}"] \arrow[d,equal] \arrow[r,tick,"FM"] & FB \arrow[r,tick,"FN"] & FC \arrow[d,equal]\\
FA\arrow[drr,phantom,"\alpha_{M \odot N}"] \arrow[d,"\alpha_A"'] \arrow[rr,tick,"F(M \odot N)"] && FC \arrow[d,"\alpha_C"]\\
GA \arrow[rr,tick,"G(M \odot N)"'] && GC
\end{tikzcd} = 
\begin{tikzcd}
FA \arrow[dr,phantom,"\alpha_M"] \arrow[d,"\alpha_A"'] \arrow[r,tick,"FM"] & FB \arrow[dr,phantom,"\alpha_N"] \arrow[d,"\alpha_D"]\arrow[r,tick,"FN"] & FC \arrow[d,"\alpha_C"] \\
GA \arrow[drr,phantom,"G_{\odot}"] \arrow[d,equal] \arrow[r,tick,"GM"] & GB \arrow[r,tick,"GN"] & GC \arrow[d,equal]\\
GA \arrow[rr,tick,"G(M \odot N)"'] && GC
\end{tikzcd}
\]

\[
\begin{tikzcd}
FA \arrow[dr,phantom," F_U"] \arrow[d,equal] \arrow[r,tick,"U_{FA}"] & FA \arrow[d,equal] \\
FA \arrow[dr,phantom,"\alpha_{U_A}"] \arrow[d,"\alpha_A"'] \arrow[r,tick,"F(U_A)"] & FA \arrow[d,"\alpha_A"]\\
GA \arrow[r,tick,"G(U_A)"'] & GA
\end{tikzcd} = 
\begin{tikzcd}
FA  \arrow[dr,phantom,"U_{\alpha_A}"] \arrow[d,"\alpha_A"'] \arrow[r,tick,"U_{FA}"] & FA \arrow[d,"\alpha_A"]\\
GA \arrow[dr,phantom, "G_U"] \arrow[d,equal] \arrow[r,tick,"U_{GA}"] & GA \arrow[d,equal]\\
GA \arrow[r,tick,"G(U_A)"'] & GA
\end{tikzcd}
\]
\end{definition}

We now describe the monoidal versions of these definitions. The definitions for the monoidal double functor and monoidal double transformation are the same as the definitions Shulman uses for monoidal framed bicategories.

\begin{definition}
A \textit{monoidal double category} is a double category equipped with functors $\otimes: \mathbb{D} \times \mathbb{D}$ and $I: * \to \mbb{D}$, and invertible transformations
\[
\otimes \circ (\Id \times \otimes) \cong \otimes \circ (\otimes \times \Id)
\]
\[
\otimes \circ (\Id \times I) \cong \Id
\]
\[
\otimes \circ (I \times \Id) \cong \Id
\]
satisfying the usual axioms.

Unpacking this definition more explicitly, we see that a monoidal double category is a double category together with the following structure.
\begin{itemize}
\item $\mbb{D}_0$ and $\mbb{D}_1$ are both monoidal categories.
\item If $I$ is the monoidal unit of $\mbb{D}_0$, then $U_I$ is the monoidal unit of $\mbb{D}_I$.
\item The functors $L$ and $R$ are strict monoidal, i.e. $L(M \otimes N) = LM \otimes LN$ and $R(M \otimes N) = RM \otimes RN$ and $L$ and $R$ also preserve the associativity and unit constraints.
\item We have globular isomorphisms
\[
\mathfrak{x}: (M_1 \otimes N_1) \odot (M_2 \otimes N_2) \xrightarrow{\cong} (M_1 \odot M_2) \otimes (N_1 \odot N_2)
\]
and
\[
\mathfrak{u}: U_{A \otimes B} \xrightarrow{\cong} (U_A \otimes U_B)
\]
such that the following diagrams commute:
\begin{center}
\begin{tikzcd}
((M_1 \otimes N_1) \odot (M_2 \otimes N_2)) \odot (M_3 \otimes N_3)\arrow[d] \arrow[r] & ((M_1 \odot M_2) \otimes (N_1 \odot N_2)) \odot (M_3 \otimes N_3) \arrow[d]\\
(M_1 \otimes N_1) \odot ((M_2 \otimes N_2) \odot (M_3 \otimes N_3)) \arrow[d] & ((M_1 \odot M_2) \odot M_3) \otimes ((N_1 \odot N_2) \odot N_3) \arrow[d] \\
(M_1 \otimes N_1) \odot ((M_2 \odot M_3) \otimes (N_2 \odot N_3)) \arrow[r] & (M_1 \odot (M_2 \odot M_3)) \otimes (N_1 \odot (N_2 \odot N_3))
\end{tikzcd}

\begin{tikzcd}
(M \times N) \odot U_{C \otimes D} \arrow[d]\arrow[r] & (M \otimes N) \circ (U_C \otimes U_D) \arrow[d]\\
M \otimes N & \arrow[l] (M \otimes U_C) \otimes (N \odot U_D)
\end{tikzcd}

\begin{tikzcd}
U_{A \otimes B} \odot (M \otimes N) \arrow[r] \arrow[d]& (U_A \otimes U_B) \odot (M \otimes N) \arrow[d] \\
M \otimes N & \arrow[l] (U_A \odot M) \otimes (U_B \odot N)
\end{tikzcd}
\end{center}
(these arise from the constraint data from the pseudo double functor $\otimes$).
\item The following diagrams commute, expressing that the associativity isomorphism for $\otimes$ is a transformation of double categories.

\begin{center}
\begin{tikzcd}
((M_1 \otimes N_1) \otimes P_1) \odot ((M_2 \otimes N_2) \otimes P_2) \arrow[d]\arrow[r] & (M_1 \otimes (N_1 \otimes P_1)) \odot (M_2 \otimes (N_2 \otimes P_2)) \arrow[d]\\
((M_1 \otimes N_1) \odot (M_2 \otimes N_20) \otimes (P_1 \odot P_2) \arrow[d]& (M_1 \odot M_2) \otimes ((N_1 \otimes P_1) \odot (N_2 \otimes P_2)) \arrow[d]\\
((M_1 \otimes M_2) \otimes (N_1 \odot N_2)) \otimes (P_1 \odot P_2) \arrow[r] & (M_1 \odot M_2) \otimes ((N_1 \odot N_2) \otimes (P_1 \odot P_2))
\end{tikzcd}

\begin{tikzcd}
U_{(A \otimes B) \otimes C} \arrow[r] \arrow[d] & U_{A \otimes (B \otimes C)} \arrow[d] \\
U_{A \otimes B} \otimes U_C \arrow[d] & U_A \otimes U_{B \otimes C} \arrow[d] \\
(U_A \otimes U_B) \otimes U_C \arrow[r] & U_A \otimes (U_B \otimes U_C)
\end{tikzcd}
\end{center}

\item The following diagrams commute, expressing that the unit isomorphisms for $\otimes$ are transformations of double categories.

\begin{center}
\begin{tikzcd}
(M \otimes U_I) \odot (N \otimes U_I) \arrow[r] \arrow[d] & (M \odot N) \otimes (U_I \odot U_I) \arrow[d]\\
M \odot N & \arrow[l] (M \odot N) \otimes U_I
\end{tikzcd} \begin{tikzcd}
U_{ A \otimes I} \arrow[r] \arrow[rd] & U_A \otimes U_I \arrow[d]\\
 & U_A
\end{tikzcd}

\begin{tikzcd}
(U_I \otimes M) \odot (U_I \otimes N) \arrow[r] \arrow[d] & (U_I \odot U_I) \otimes (M \odot N) \arrow[d] \\
M \odot N & \arrow[l] U_I \otimes (M \odot N)
\end{tikzcd} \begin{tikzcd}
U_{I \otimes A} \arrow[dr] \arrow[r] & U_I \otimes U_A \arrow[d] \\
 & U_A
\end{tikzcd}
\end{center}
\end{itemize}
\end{definition}

A good example of a monoidal double category is $\Prof$, \textit{the double category of categories and profunctors}. One can check that imposing the Cartesian product as the monoidal product gives us a monoidal double category.

\begin{definition}
A \textit{lax monoidal double functor} between monoidal double categories $\mbb{D},\mbb{E}$ consists of the following structure and properties.
\begin{itemize}
\item A lax double functor $F: \mbb{D} \to \mbb{E}$.
\item The structure of a lax monoidal functor on $F_0$ and $F_1$.
\item Equalities $LF_1 = F_0L$ and $RF_1 = F_0 R$ of lax monoidal functors.
\item The composition constraints for the lax double functor $F$ are monoidal natural transformations.
\end{itemize}
\end{definition}

\begin{definition}
A \textit{monoidal double transformation} is a double transformation such that $\alpha_0$ and $\alpha_1$ are monoidal natural transformations.
\end{definition}

\section{Dynamical Systems as a Double Functor} \label{dynamical}

We can now describe our wiring boxes and wiring diagrams as a double category, allowing us to describe one morphism using wiring diagrams and another morphism using commuting diagrams which describe a wiring box acting in step with another.

\begin{definition}
We define $\mathscr{W}$, \textit{the double category of wiring diagrams}, by the following categories $\mathscr{W}_0$ and $\mathscr{W}_1$. 
$\mathscr{W}_0$ is the category of wiring diagrams.
The objects are ordered pairs $(A,B)$ with $A,B \in \mathbf{Set}$ and the morphisms are wiring diagrams.

\begin{center}
\begin{tikzcd}
(A,B) \arrow[r,"\varphi"] & (C,D)
\end{tikzcd}
\end{center}

\[
\varphi^{in}: C \times B \to A
\]
\[
\varphi^{out}: B \to D
\]
The composition of wiring diagrams $\varphi$ and $\psi$
\begin{center}
\begin{tikzcd}
(A,B) \arrow[r,"\varphi"] & (C,D) \arrow[r,"\psi"] & (E,F)
\end{tikzcd}
\end{center}
is the wiring diagram with $(\psi \circ \varphi)^{in} : E \times B \to A$ given by
\begin{center}
\begin{tikzcd}
E \times B \arrow[r,"id \times \Delta"] &[5pt] E \times B \times B \arrow[r,"id \times \varphi^{out} \times id"] &[30pt] E \times D \times B \arrow[r,"\psi^{in} \times id"] &[15pt] C \times B \arrow[r,"\varphi^{in}"] & A
\end{tikzcd}
\end{center}
and $(\psi \circ \varphi)^{out} : B \to F$ given by
\begin{center}
\begin{tikzcd}
B \arrow[r,"\varphi^{out}"] & D \arrow[r,"\psi^{out}"] & F
\end{tikzcd}
\end{center}

We define $\mathscr{W}_1$ to be the following category.
The objects are ordered pairs $(f,g)$ where $f: A \to A'$ and $g: B \to B'$ with $L(f,g) = (A,B)$ and $R(f,g) = (A',B')$.
We define a morphism $(f,g)$ to $(f',g')$ to be a pair of wiring diagrams $(\varphi_1,\varphi_2)$ such that the following diagrams commute.

\begin{center}
\begin{tikzcd}
B_1 \arrow[d,"\varphi_1^{out}"'] \arrow[r,"g"] & B_2 \arrow[d,"\varphi_2^{out}"]\\
B_1' \arrow[r,"g'"'] & B_2'
\end{tikzcd} \begin{tikzcd}
A_1' \times B_1 \arrow[d,"\varphi_1^{in}"'] \arrow[r,"f' \times g"] & A_2' \times B_2 \arrow[d,"\varphi_2^{in}"]\\
A_1 \arrow[r,"f"'] & A_2
\end{tikzcd}
\end{center}
The resulting 2-cell can be described by the following box.
\begin{center}
\begin{tikzcd}
(A_1,B_1) \arrow[d,"\varphi_1"'] \arrow[r,tick,"{(f,g)}"] & (A_2,B_2) \arrow[d,"\varphi_2"] \\
(A_1',B_1') \arrow[r,tick,"{(f',g')}"'] & (A_2',B_2')
\end{tikzcd}
\end{center}

Then we let the composition of two morphisms in $\mathscr{W}_1$ be given by the following equality.
\[
\begin{tikzcd}
(A_1',B_1') \arrow[d,"\varphi'_1"'] \arrow[r,"{(f',g')}"] & (A_2',B_2') \arrow[d,"\varphi_2'"] \\
(A_2',B_2') \arrow[r,"{(f'',g'')}"'] & (A_2'',B_2'')
\end{tikzcd} \circ \begin{tikzcd}
 (A_1,B_1) \arrow[d,"\varphi_1"'] \arrow[r,"{(f,g)}"] & (A_2,B_2) \arrow[d,"\varphi_2"]\\
(A_1',B_1') \arrow[r,"{(f',g')}"'] & (A_2',B_2')
\end{tikzcd} = \begin{tikzcd}
(A_1,B_1) \arrow[d,"\varphi_1' \circ \varphi_1"'] \arrow[r,"{(f,g)}"] & (A_2,B_2) \arrow[d,"\varphi_2' \circ \varphi_2"] \\
(A_1'',B_1'') \arrow[r,"{(f'',g'')}"'] & (A_2'',B_2'')
\end{tikzcd}
\]
The latter term is a valid morphism since we have the following commuting diagrams.

\begin{center}
\begin{tabular}{cc}
\begin{tikzcd}
B_1 \arrow[d,"\varphi_1^{out}"'] \arrow[r,"g"] & B_2 \arrow[d,"\varphi_2^{out}"] \\
B_1' \arrow[d,"{\varphi''_1}^{out}"']\arrow[r,"g'"] & B_2' \arrow[d,"{\varphi'_2}^{out}"] \\
B_1'' \arrow[r,"g''"'] & B_2''
\end{tikzcd} & \begin{tikzcd}
A_1'' \times B_1 \arrow[d,"id \times \Delta"'] \arrow[r,"f'' \times g"] & A_2'' \times B_2 \arrow[d,"id \times \Delta"]\\
A_1'' \times B_1 \times B_1 \arrow[d,"id \times \varphi_1^{out} \times id"'] \arrow[r,"f'' \times g \times g"] & A_2'' \times B_2 \times B_2 \arrow[d,"id \times \varphi_2^{out} \times id"]\\
A''_1 \times B_1' \times B_1 \arrow[d,"{\varphi_1'}^{in} \times id"'] \arrow[r,"f'' \times g' \times g"] & A_2'' \times B_2' \times B_2 \arrow[d,"{\varphi_2'}^{in} \times id"] \\
A'_1 \times B_1 \arrow[d,"\varphi_1^{in}"] \arrow[r,"f' \times g"'] & A'_2 \times B_2 \arrow[d,"\varphi_2^{in}"] \\
A_1 \arrow[r,"f"'] & A_2
\end{tikzcd}
\end{tabular}
\end{center}

We define horizontal composition for objects by $(f',g') \odot (f,g) = (f' \circ f,g' \circ g)$ and define horizontal composition for morphisms by taking the unique morphism having the required left and right values.
One can check that the identity natural transformations work for $a,l,r$.
\end{definition}

Now we impose the following monoidal product to represent the combination of two wiring boxes.

\[
\left(\begin{tikzcd}
(A_1,B_1) \arrow[r,"{(f,g)}"] & (A_2,B_2)
\end{tikzcd}\right)
\otimes
\left(\begin{tikzcd}
(A_1',B_1') \arrow[r,"{(f',g')}"] & (A_2',B_2')
\end{tikzcd}\right)
\]
\[=
\begin{tikzcd}
(A_1 \times A_1',B_1 \times B_1') \arrow[r,"{(f \times f',g \times g')}"] &[25pt] (A_2 \times A_2',B_2 \times B_2')
\end{tikzcd}
\]

\[
\begin{tikzcd}
(A_1,B_1) \arrow[d,"\varphi_1"] \\
(C_1,D_1)
\end{tikzcd} \otimes
\begin{tikzcd}
(A_2,B_2) \arrow[d,"\varphi_2"] \\
(C_2,D_2)
\end{tikzcd} = 
\begin{tikzcd}
(A_1 \times A_2, B_1 \times B_2) \arrow[d,"\varphi_1 \times \varphi_2"]\\
(C_1 \times C_2,D_1 \times D_2)
\end{tikzcd}
\]
In the second equality, the last wiring diagram $\varphi_1 \times \varphi_2$ is the product of wiring diagrams $\varphi_1$ and $\varphi_2$ and is given by the following functions.
\[
(\varphi_1 \times \varphi_2)^{out}(s_1 \times s_2) = \varphi_1^{out}(s_1) \times \varphi_2^{out}(s_2)
\]
\[
(\varphi_1 \times \varphi_2)^{in}(a_1 \times a_2,s_1 \times s_2) = \varphi_1^{in}(a_1,s_1) \times \varphi_2^{in}(a_2,s_2)
\]
Since the monoidal product of horizontal moprhisms and vertical morphisms act coordinate-wise on the objects, the monoidal product of 2-cells make sense using the monoidal product of the boundaries.

We prove that $\mathscr{W}_0$ forms a monoidal category.
Our product forms a functor since
\[
(\varphi_2^{out} \circ \varphi_1^{out}) \times (\psi_2^{out} \circ \psi_1^{out}) = (\varphi_2^{out} \times \psi_2^{out}) \circ (\varphi_1^{out} \times \psi_1^{out})
\]
 and 
\begin{multline*}
\varphi_1(\varphi_2(A_3,\varphi_1^{out}(B_1)),B_1) \times \psi_1(\psi_2(C_2,\psi_1^{out}(D_1)),D_1) =\\
 (\varphi_1 \times \psi_1)(\varphi_2 \times \psi_2)(A_2 \times C_2, \varphi_1 \times \psi_1(B_1 \times D_1)),B_1 \times D_1)
\end{multline*}
The unit object is $(*,*)$ and the unit morphism is the trivial wiring diagram $(*,*) \to (*,*)$.

$\mathscr{W}_1$ also forms a monoidal category with the following product.
\[
\begin{tikzcd}
(A_1,B_1) \arrow[r,"{(f_1,g_1)}"] \arrow[d,"\varphi"'] & (C_1,D_1) \arrow[d,"\psi"] \\
(A_2,B_2) \arrow[r,"{(f_2,g_2)}"'] & (C_2,D_2)
\end{tikzcd} \otimes \begin{tikzcd}
(A_1',B_1') \arrow[r,"{(f_1',g_1')}"] \arrow[d,"\varphi'"'] & (C_1',D_1') \arrow[d,"\psi'"] \\
(A_2',B_2') \arrow[r,"{(f_2',g_2')}"'] & (C_2',D_2')
\end{tikzcd}\]\[ = \begin{tikzcd}
(A_1 \times A_1',B_1 \times B_1') \arrow[r,"{(f_1 \times f_1',g_1 \times g_1')}"] \arrow[d,"\varphi \times \varphi'"'] &[25pt] (C_1 \times C_1', D_1 \times D_1') \arrow[d,"\psi \times \psi'"] \\
(A_2 \times A_2', B_2 \times B_2') \arrow[r,"{(f_2 \times f_2',g_2 \times g_2')}"'] &[25pt] (C_2 \times C_2', D_2 \times D_2')
\end{tikzcd}
\]
Our product forms a functor since everything works coordinate-wise, and we can take the unit object to be \begin{tikzcd} (*,*) \arrow[r,"{(!,!)}"] & (*,*) \end{tikzcd} and the unit morphism to be the box with the unit objects at the top and bottom and unit wiring diagrams at the left and right.
These products on $\mathscr{W}_0$ and $\mathscr{W}_1$ affect $L$ and $R$ in the same way so $L$ and $R$ are strict monoidal functors.
Finally, we can take $\mathfrak{x}$ and $\mathfrak{u}$ to be identities and the remaining conditions can be easily checked. 

%
%

%

As mentioned in the introduction, for continuous dynamical systems, our state set $S$ is a real vector space.
We define a morphism of continuous dynamical systems to be a commuting diagram
\[
\begin{tikzcd}
A \times S \arrow[d,"id \times m"'] \arrow[r,"f^{in}"] & S \arrow[d,"m"] \arrow[r,"f^{out}"] & B \arrow[d,"id"]\\
A \times T \arrow[r,"g^{in}"'] & T \arrow [r,"g^{out}"'] & B
\end{tikzcd}
\]
such that $m$ is a linear map.
If $m$ is an isomorphism, then we can say that the dynamical systems are isomorphic, and from this, we can split dynamical systems with inputs $A$ and $B$ into isomorphism classes.

We define the category $\CS(A,B)$, with the objects being the isomorphism classes of dynamical systems
\begin{center}
\begin{tikzcd}
A \times S \arrow[r,"f^{in}"] & S \arrow[r,"f^{out}"] & B
\end{tikzcd}
\end{center}
where $S$ is a real vector space and given representatives of two isomorphism classes, say with state sets $S$ and $T$, we define a morphism to be a diagram
\begin{center}
\begin{tikzcd}
A \times S \arrow[d,"id \times m"'] \arrow[r,"f^{in}"] & S \arrow[d,"m"] \arrow[r,"f^{out}"] & B \arrow[d,"id"]\\
A \times T \arrow[r,"g^{in}"'] & T \arrow [r,"g^{out}"'] & B
\end{tikzcd}
\end{center}
with $m: S \to T$ a linear map such that the diagram commutes.

We can now define the map $\CS: \mathscr{W} \to \Prof$ to represent our continuous dynamical systems.
We define $\CS_0$ by sending $(A,B) \in \mathscr{W}_0$ to the category $\CS(A,B)$.
To show that this is a functor, we need to show that each wiring diagram $\varphi: (A,B) \to (A',B')$ gives a functor $CS(A,B) \to CS(A',B')$.
The wiring diagram $\varphi$ act on morphisms in $\CS(A,B)$ by the following commuting diagrams.

\begin{center}
\begin{tabular}{cc}
\begin{tikzcd}
A' \times S \arrow[d,"id \times m"'] \arrow[r,"id \times \Delta"] & A' \times S \times S \arrow[d,"id \times m \times m"] \arrow[r,"id \times out \times id"] &[15pt] A' \times B \times S \arrow[d,"id \times id \times m"] \arrow[r,"\varphi^{in} \times id"] & A\times S \arrow[d,"id \times m"] \arrow[r,"in"] & S \arrow[d,"m"]\\
A' \times S' \arrow[r,"id \times \Delta"'] & A' \times S' \times S' \arrow[r,"id \times out \times id"'] &[15pt] A' \times B \times S' \arrow[r,"\varphi^{in} \times id"'] & A \times S' \arrow[r, "in"'] & S'
\end{tikzcd} &
\begin{tikzcd}
S \arrow[d,"m"'] \arrow[r,""] & B \arrow[d,"id"] \arrow[r,"\varphi^{out}"] & B' \arrow[d,"id"]\\
S' \arrow[r,""] & B \arrow[r,"\varphi^{out}"'] & B'
\end{tikzcd}
\end{tabular}
\end{center}
Since these diagrams respect the composition of morphisms in $\CS(A,B)$, $\CS(\varphi)$ is a functor.
We can also see that $\CS(\varphi) \circ \CS(\psi) = \CS(\varphi \circ \psi)$ because both sides send change the inputs and outputs of a dynamical system in the same way.

%
%

We then define $\CS_1: \mathscr{W}_1 \to \Prof_1$ to be the map given by sending \begin{tikzcd}
(A,B) \arrow[r,"{(f,g)}"] & (C,D) \in \mathscr{W}_1
\end{tikzcd} to the profunctor $\CS(A,B)^{op} \times \CS(C,D) \to \Set$ where 
\[
(\begin{tikzcd}
A \times S \arrow[r] & S \arrow[r] & B
\end{tikzcd}, \begin{tikzcd}
C \times T \arrow[r] & T \arrow[r] & D
\end{tikzcd})
\]
 is mapped to the set of linear maps $m$ such that the following diagram commutes.
\begin{center}
\begin{tikzcd}
A \times S \arrow[d,"f \times m"'] \arrow[r] & S \arrow[d,"m"] \arrow[r] & B\arrow[d,"g"]\\
C \times T \arrow[r] & T \arrow[r] & D
\end{tikzcd}
\end{center}
We send the morphism
\[
 \begin{tikzcd}
(A,B) \arrow[d,"\varphi"'] \arrow[r,tick,"{(f,g)}"] & (C,D) \arrow[d,"\psi"]\\
(A',B') \arrow[r,tick,"{(f',g')}"'] & (C',D')
\end{tikzcd} \in \mathscr{W}_1
\]
to the natural transformation sending
\begin{center}
\begin{tikzcd}
A \times S \arrow[d,"f \times m"'] \arrow[r] & S \arrow[d,"m"] \arrow[r] & B\arrow[d,"g"]\\
C \times T \arrow[r] & T \arrow[r] & D
\end{tikzcd}
\end{center}
to
\begin{center}
\begin{tikzcd}
A' \times S \arrow[d,"f' \times m"'] \arrow[r] & S \arrow[d,"m"] \arrow[r] & B\arrow[d,"g'"]\\
C' \times T \arrow[r] & T \arrow[r] & D
\end{tikzcd}
\end{center}
by applying $\varphi$ and $\psi$ to the corresponding inputs and outputs.
This is a valid transformation because of how the squares in $\mathscr{W}_1$ commute.

%
%

Since the chosen profunctors for $\CS(U(x))$ align with the hom functor of $\CS(x)$ for any $x \in \mathscr{W}_0$, we can take $\CS_U$ to be the identity natural transformation.
We define $\CS_{\odot}$ by assigning to each element $(f,g) \times (f',g') \in \mathscr{W}_1 \times_{\mathscr{W}_0} \mathscr{W}_1$ the morphism in $\Prof_1$ sending 
\[
\begin{tikzcd}
A \times S \arrow[d] \arrow[r] & S \arrow[d,"f"] \arrow[r] & B \arrow[d]\\
A' \times S' \arrow[d] \arrow[r] & S' \arrow[d,"g"] \arrow[r] & B'\arrow[d]\\
A'' \times S'' \arrow[r] & S'' \arrow[r] & B''
\end{tikzcd}
\]
to 
\[
\begin{tikzcd}
A \times S \arrow[d] \arrow[r] & S \arrow[d, "g \circ f"]\arrow[r] & B \arrow[d]\\
A'' \times S'' \arrow[r] & S'' \arrow[r] & B'
\end{tikzcd}
\]
where each of the dynamical systems above are the representatives of their isomorphism classes.
This gives us a transformation because the equivalence classes generated by profunctor composition give the same result under our dynamical system morphism composition.
Associativity follows from the fact that function composition is associative and unitality follows from the fact that our profunctors and composition transformation work the same way as unit profunctors and their composition.

Now we show that $\CS$ is a lax monoidal double functor.
First, we show that $\CS_0$ and $\CS_1$ are lax monoidal functors.
We define $\epsilon$ to be the untial morphism of $\CS_0$ sending the trivial category to the trivial dynamical system $\begin{tikzcd} * \times * \arrow[r] & * \arrow[r] & *\end{tikzcd}$ in $\CS(*,*)$ and we define $\delta$ be the unital morphism of $\CS_1$ sending the trivial profunctor to the trivial hom from the trivial dynamical system to itself.
For monoidal natural transformations, we define $\mu$ to be the monoidal natural transformation of $\CS_0$ which sends
\[
(\begin{tikzcd} A_1 \times S_1 \arrow[r] & S_1 \arrow[r] & B_1 \end{tikzcd},\begin{tikzcd}A_2 \times S_2 \arrow[r] & S_2 \arrow[r] & B_2 \end{tikzcd})
\]
 to 
\[
\begin{tikzcd} (A_1 \times A_2) \times (S_1 \times S_2) \arrow[r] & S_1 \times S_2 \arrow[r] & B_1 \times B_2\end{tikzcd}.
\]
and define $\nu$ to be the monoidal natural transformation of $\CS_1$ which sends an ordered pair of dynamical system morphisms to the product of the morphisms.
Since everything works componentwise, the associativity and unitality constraints for our lax monoidal functors hold.
We already know that $L\CS_1 = \CS_0 L$ and $R\CS_1 = \CS_0 R$ as functors so we just need to check the lax monoidal properties.
All of these functors send ordered pairs of categories of dynamical systems to the category of product dynamical systems in the same way, so the equalities hold.
Finally, we need to check that the composition constraints are monoidal natural transformations.
The unital transformation is indeed monoidal since it is the identity transformation.
For the composition transformation, we need to show that the following diagram commutes.
\[
\begin{tikzcd}
(\CS(M_1) \odot \CS(N_1)) \otimes (\CS(M_2) \odot \CS(N_2)) \arrow[r] \arrow[d] & \CS(M_1 \odot N_1) \otimes \CS(M_2 \odot N_2) \\
\CS(M_1 \otimes M_2) \odot \CS(N_1 \otimes N_2) \arrow[r] & \CS((M_1 \otimes M_2) \odot (N_1 \otimes N_2))
\end{tikzcd}
\]
This can be seen from the fact that both paths send the pair of composed dynamical system morphisms
\[
\left(\begin{tikzcd}
A \times S \arrow[d] \arrow[r] & S \arrow[d] \arrow[r] & B \arrow[d]\\
A' \times S' \arrow[d] \arrow[r] & S' \arrow[d] \arrow[r] & B'\arrow[d]\\
A'' \times S'' \arrow[r] & S'' \arrow[r] & B''
\end{tikzcd},\begin{tikzcd}
C \times T \arrow[d] \arrow[r] & T \arrow[d] \arrow[r] & D \arrow[d]\\
C' \times T' \arrow[d] \arrow[r] & T' \arrow[d] \arrow[r] & D'\arrow[d]\\
C'' \times T'' \arrow[r] & T'' \arrow[r] & D''
\end{tikzcd}\right)
\]
to the morphism
\[
\begin{tikzcd}
(A \times C) \times (S \times T) \arrow[d] \arrow[r] & S \times T \arrow[d] \arrow[r] & B \times D \arrow[d]\\
(A'' \times C'') \times (S'' \times T'') \arrow[r] & S'' \times T'' \arrow[r] & B'' \times D''
\end{tikzcd}
\]
One path does this by taking the product morphism first, then composing, while the other path composes the morphisms and then takes the product.
Since both paths give the same result, we see that $\CS$ is a lax monoidal double functor.


We also define the category of four-step dynamical systems.
A four-step dynamical system is a discrete dynamical system along with a commuting diagram
\[
\begin{tikzcd}
A \times S \arrow[d] \arrow[r] & S \arrow[d] \arrow[r] & B \arrow[d]\\
* \times \{1,2,3,4\} \arrow[r] & \{1,2,3,4\} \arrow[r] & * \\
\end{tikzcd}
\]
where the bottom dynamical system behaves by cyclically iterating through all four elements.
We will denote this bottom dynamical system by $C_4$.
Another way we can portray this dynamical system is
\[
\begin{tikzcd}
A \times (S_1 \sqcup S_2 \sqcup S_3 \sqcup S_4) \arrow[r] & (S_1 \sqcup S_2 \sqcup S_3 \sqcup S_4) \arrow[r] & B
\end{tikzcd}
\]
where each $S_i$ is mapped to $i$.
A morphism between two four-step dynamical systems is a map $f: S \to T$ such that $S_i$ maps into $T_i$ and the diagram
\[
\begin{tikzcd}
A \times S \arrow[d,"f \times m"'] \arrow[r] & S \arrow[d,"m"] \arrow[r] & B\arrow[d,"g"]\\
C \times T \arrow[r] & T \arrow[r] & D
\end{tikzcd}
\]
commutes.
As in the case for $\CS$, we describe $(\DS/ C_4)(A,B)$ to be the category of isomorphism classes of four-step dynamical systems with input set $A$ and output set $B$.
Then we define $(\DS/ C_4)_0$ to be the category with objects of the form $(\DS / C_4)(A,B)$ and morphisms being wiring diagrams, and we define $(\DS/ C_4)_1$ to be the category of profunctors describing the morphisms of dynamical system classes with different inputs or outputs.
Like we did for $\CS$, we take $(\DS / C_4)_U$ to be the identity and $(\DS / C_4)_\odot$ to be the composition natural transformation.
The proof that these give a lax monoidal double functor is very similar to the proof for $\CS$.

%
%

\section{Runge-Kutta as a Double Transformation} \label{runge-kutta}

\begin{definition}
The Runge-Kutta map $\RK_h(A,B): \CS(A,B) \to \DS/ C_4(A,B)$ is defined by sending a dynamical system
\begin{center}
\begin{tikzcd}
A \times S \arrow[r] & S \arrow[r] & B
\end{tikzcd}
\end{center}
to the dynamical system
\begin{center}
\begin{tikzcd}
A \times (S \sqcup S^2 \sqcup S^3 \sqcup S^4) \arrow[d] \arrow[r] & S \sqcup S^2 \sqcup S^3 \sqcup S^4 \arrow[d] \arrow[r] & B\arrow[d]\\
* \times \{1,2,3,4\} \arrow[r] & \{1,2,3,4\} \arrow[r] & *
\end{tikzcd}
\end{center}
where 
\begin{align*}
{f'}^{upd}(a,s) &= (a,s,f^upd(s))\\
{f'}^upd(a,s,k_1) &= (a,s,k_1,f^upd(s + \frac{h}{2} k_1))\\
{f'}^upd(a,s,k_1,k_2) &= (a,s,k_1,k_2,f^upd(s + \frac{h}{2} k_2))\\
{f'}^upd(a,s,k_1,k_2,k_3) &= (a,s + \frac{h}{6} k_1 + \frac{h}{3} k_2 + \frac{h}{3} k_3 + \frac{h}{6} f^upd(a,s + hk_3))
\end{align*}
and
\begin{align*}
f'^rdt(a,s) &= s\\
f'^rdt(a,s,k_1) &= s + \frac{h}{2} k_1\\
f'^rdt(a,s,k_1,k_2) &= s + \frac{h}{2}k_2\\
f'^rdt(a,s,k_1,k_2,k_3) &= s + hk_3\\
\end{align*}
\end{definition}

\begin{lemma}
Let $X \in \CS(A,B)$ be a dynamical system. Then for any wiring diagram $\varphi: (A,B) \to (C,D)$, we have
\[
\RK_h(\varphi(X)) = \varphi(\RK_h(X))
\]
\end{lemma}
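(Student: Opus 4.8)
The plan is to compute both sides of the asserted equality explicitly as four-step dynamical systems and check that they coincide on the nose. Both $\RK_h(\varphi(X))$ and $\varphi(\RK_h(X))$ are four-step systems carried by the same state set $S \sqcup S^2 \sqcup S^3 \sqcup S^4$ lying over the same cyclic system $C_4$, so it suffices to verify that their update and readout functions agree stage by stage; since the structure maps will turn out to be literally equal, the equality of isomorphism classes follows at once.

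First I would record the two operations in coordinates. Writing $f^{upd}$ and $f^{out}$ for the data of $X$, the wired continuous system $\varphi(X)$ has update $(\varphi X)^{upd}(c,s) = f^{upd}(\varphi^{in}(c,f^{out}(s)),s)$ and readout $(\varphi X)^{out}(s) = \varphi^{out}(f^{out}(s))$, read off from the composite defining $\CS(\varphi)$. On the other side, since $\varphi$ acts on $\DS/C_4$ by exactly the same wiring composite as on $\CS$, the system $\varphi(\RK_h(X))$ has, at a state $\sigma$, update $\RK_h(X)^{upd}(\varphi^{in}(c,\RK_h(X)^{out}(\sigma)),\sigma)$ and readout $\varphi^{out}(\RK_h(X)^{out}(\sigma))$. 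The essential observation is that the readout of $\RK_h(X)$ at each of its four stages is $f^{out}$ evaluated at the current sample point, so that feeding this readout through $\varphi^{in}$ reproduces precisely the input-rewiring that $\varphi$ performs directly on the continuous update function.

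The core of the argument is an induction on the stage index $i \in \{1,2,3,4\}$ showing that the sample point $p_i$ and the Runge-Kutta increment $k_i$ agree on both sides, with the external input $c$ held fixed. The base case is $p_1 = s$ on both sides. For the inductive step, on the $\RK_h(\varphi(X))$ side the increment is $k_i = (\varphi X)^{upd}(c,p_i) = f^{upd}(\varphi^{in}(c,f^{out}(p_i)),p_i)$, while on the $\varphi(\RK_h(X))$ side the stage-$i$ readout $f^{out}(p_i)$ is wired through $\varphi^{in}$ to produce the input $\varphi^{in}(c,f^{out}(p_i))$ handed to the unwired Runge-Kutta update, which then computes $f^{upd}(\varphi^{in}(c,f^{out}(p_i)),p_i)$ as well; hence the increments coincide. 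Since $p_{i+1}$ is the same fixed linear combination of $s$ and $k_1,\dots,k_i$ in both systems, the sample points coincide too, closing the induction. Applying this at the fourth stage shows the two state updates $s + \tfrac{h}{6}(k_1 + 2k_2 + 2k_3 + k_4)$ agree, and at every stage the readout $\varphi^{out}(f^{out}(p_i))$ matches because the $p_i$ agree; thus the two four-step systems are identical.

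I expect the main obstacle to be the careful bookkeeping of the intermediate readouts across the disjoint-union state set: one must confirm that at each of the four sub-steps the four-step system exposes exactly $f^{out}$ of the relevant sample point, so that $\varphi^{in}$ supplies the correct input for that sub-step's vector-field evaluation. Once this alignment is established, the remaining verifications are routine substitutions into the formulas of the preceding definition.
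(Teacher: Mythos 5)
Your proposal is correct and follows essentially the same route as the paper: unfold $\varphi(X)$ in coordinates as $g^{upd}(c,s)=f^{upd}(\varphi^{in}(c,f^{out}(s)),s)$ and substitute into the Runge--Kutta formulas. Your version is in fact more complete than the paper's, which only writes out $\RK_h(\varphi(X))$ and leaves the comparison with $\varphi(\RK_h(X))$ implicit, whereas you make explicit the key alignment --- that the stage-$i$ readout of $\RK_h(X)$ exposes exactly the sample point used in the stage-$i$ update, so the rewired inputs agree stage by stage.
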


\begin{proof}
Let $X$ be the dynamical system
\[
\begin{tikzcd}
A \times S \arrow[r,"f^{upd}"] & S \arrow[r,"f^{out}"] & B
\end{tikzcd}
\]
Then $\varphi(X)$ is the dynamical system
\begin{tikzcd}
C \times S \arrow[r,"g^{upd}"] & S \arrow[r,"g^{out}"] & B
\end{tikzcd}
where $g^{upd}(c,s) = f^{upd}(\varphi^{in}(c,f^{out}(s)),s)$ and $g^{out} = \varphi^{out}(f^{out}(s))$.
We then compute that $\RK_h(\varphi(X))$ is the dynamical system
\[
\begin{tikzcd}
C \times S \sqcup S^2 \sqcup S^3 \sqcup S^4 \arrow[r,"h_1^{upd}"] & S \sqcup S^2 \sqcup S^3 \sqcup S^4 \arrow[r,"h_1^{out}"] & D
\end{tikzcd}
\]
with
\begin{align*}
h_1^{upd}(c,s) &= (c,s,g^{upd}(c,s)) = (c,s,f^{upd}(\varphi^{in}(c,f^{out}(s)),s))\\
h_1^{upd}(c,s,k_1) &= (c,s,k_1,g^{upd}(c,s + \frac{h}{2} k_1)) = (c,s,k_1,f^{upd}(\varphi^{in}(c,f^{out}(s + \frac{h}{2} k_1)),s + \frac{h}{2} k_1))\\
h_1^{upd}(c,s,k_1,k_2) &= (c,s,k_1,k_2,g^{upd}(c,s + \frac{h}{2} k_2)) = (c,s,k_1,k_2,f^{upd}(\varphi^{in}(c,f^{out}(s + \frac{h}{2} k_2)),s + \frac{h}{2} k_2))\\
h_1^{upd}(c,s,k_1,k_2,k_3) &= (c,s + \frac{h}{6} k_1 + \frac{h}{3} k_2 + \frac{h}{3} k_3 + \frac{h}{6} g^{upd}(c,s + hk_3)) \\
&= (c,s + \frac{h}{6} k_1 + \frac{h}{3} k_2 + \frac{h}{3} k_3 + \frac{h}{6} f^{upd}(\varphi^{in}(c,f^{out}(s + hk_3)),s + hk_3)
\end{align*}
\end{proof}

Finally, we restate Theorem 1 and prove it.

\rkthm*

\begin{proof}

We first show that $(RK_h)_0$ defines a natural transformation $\CS_0 \to (\DS/ C_4)_0$.
For this, we need to show that $RK_h$ gives a functor from $\CS_0(A,B)$ to $(\DS/ C_4)(A,B)$.
This fact follows from the above lemma and the fact that we are sending the $\CS$-morphism
\[
\begin{tikzcd}
A \times S \arrow[d,"\varphi \times m"'] \arrow[r,"f^{upd}"] & S \arrow[d,"m"]\arrow[r,"f^{out}"] & B \arrow[d,"\varphi"]\\
A \times S' \arrow[r,"f'^{upd}"'] & S' \arrow[r,"f'^{out}"'] & B\\
\end{tikzcd}
\]
to the $\DS/ C_4$-morphism
\[
\begin{tikzcd}
A \times (S \sqcup S^2 \sqcup S^3 \sqcup S^4) \arrow[d,"{\varphi \times (m \sqcup m^2 \sqcup m^3 \sqcup m^4)}"'] \arrow[r] & S \sqcup S^2 \sqcup S^3 \sqcup S^4 \arrow[d,"m \sqcup m^2 \sqcup m^3 \sqcup m^4"] \arrow[r] & B\arrow[d,"\varphi"]\\
A \times (S' \sqcup S'^2 \sqcup S'^3 \sqcup S'^4) \arrow[r] & S' \sqcup S'^2 \sqcup S'^3 \sqcup S'^4 \arrow[r] & B
\end{tikzcd}
\]
Since the morphisms are component-wise, we have that the functoral properties hold.
Now we just need to show that these morphisms in $\Prof$ behave naturally with $\CS$ and $\DS / C_4$, but this follows from our above Lemma, so we can see that $(RK_h)_0$ gives us a natural transformation between $\CS_0$ and $(\DS / C_4)_0$.

We then show that $(RK_h)_1$ gives a natural transformation $\CS_1$ to $(\DS / C_4)_1$.
This is not hard to check since we are sending 
\[
\begin{tikzcd}
A \times S \arrow[d,"\varphi \times m"'] \arrow[r,"f^{upd}"] & S \arrow[d,"m"]\arrow[r,"f^{out}"] & B \arrow[d,"\varphi"]\\
A' \times S' \arrow[r,"f'^{upd}"] & S' \arrow[r,"f'^{out}"] & B'\\
\end{tikzcd}
\]
to
\[
\begin{tikzcd}
A \times (S \sqcup S^2 \sqcup S^3 \sqcup S^4) \arrow[d,"{\varphi \times (m \sqcup m^2 \sqcup m^3 \sqcup m^4)}"'] \arrow[r] & S \sqcup S^2 \sqcup S^3 \sqcup S^4 \arrow[d,"m \sqcup m^2 \sqcup m^3 \sqcup m^4"] \arrow[r] & B\arrow[d,"\varphi"]\\
A' \times (S' \sqcup S'^2 \sqcup S'^3 \sqcup S'^4) \arrow[r] & S' \sqcup S'^2 \sqcup S'^3 \sqcup S'^4 \arrow[r] & B'
\end{tikzcd}
\]
and all the morphisms are component-wise.
One can check the diagrams in the definition commute since our functions act component-wise and also compose component-wise.

Finally, we just need to show that these natural transformations are monoidal so that we have a lax monoidal double transformation.
For this, we need the natural transformation to commute with our monoidal transformations.
This is true for both natural transformations, since multiplying the dynamical systems and then applying Runge-Kutta gives an isomorphic dynamical system as applying Runge-Kutta to two dynamical systems and then merging the two systems.
The only difference is that the order of the products in the discrete sum may be different, but since our maps work component-wise, we still get isomorphic results, so our natural transformation indeed commutes with our monoidal transformations.
Our last requirement, the unital requirement for the natural transformations, is easy to check, so Runge-Kutta indeed gives a monoidal double transformation.
\end{proof}

\section{Acknowledgements}

The author would like to thank David Spivak for many helpful discussions.

\printbibliography

\end{document}